\numberwithin{figure}{section}
\numberwithin{equation}{section}
\numberwithin{table}{section}
\numberwithin{theorem}{section}
\newcommand{\DDD}{\mathcal{D}}
\newcommand{\DDT}{\widetilde{\mathcal{D}}}
\newcommand{\G}{\CC{\Gamma}}
\newcommand{\ee}{{\mathrm e}}
\newcommand{\ii}{{\mathrm i}}
\newcommand{\hyper}[5]
       {{}_{#1} F_{#2} \!\left[
           \begin{array}{l}
         #3;\\#4;
           \end{array}#5\right]
       }
\newcommand{\PP}{\mathrm{P}}
\date{}
\title{On semi-separability and differentiation matrices}
\begin{document}
\thispagestyle{empty}

\author{Arieh Iserles\\Department of Applied Mathematics and Theoretical Physics\\University of Cambridge}

\maketitle

\begin{abstract}
  The theory of spectral methods for partial differential equations leads to infinite-dimensional matrices which represent the derivative operator with respect to an underlying orthonormal basis. Favourable properties of such {\em differentiation matrices\/} are crucial in the design of good spectral methods. It is known that bases using Laguerre and ultraspherical polynomials lead to semi-separable differentiation matrices of rank~1. In this paper we consider orthonormal bases constructed from Jacobi polynomials and prove that the underlying differentiation matrices are semi-separable of rank~2. This requires new results on semi-separable matrices which might be interesting in a wider context.
\end{abstract}

\tableofcontents

\section{Introduction}

This paper is motivated by spectral methods for time-dependent partial differential equations, aims at resolving a problem concerning matrix analysis and numerical linear algebra and uses methodology of the theory of orthogonal polynomials.

\subsection{Spectral methods}

In order to motivate the narrative of this paper, let us consider a time-dependent partial differential equation in a single space variable,
\begin{equation}
  \label{eq:1.1}
  \frac{\partial u}{\partial t}=\mathcal{K}(t,x,u),\qquad x\in(a,b),\quad t\geq0,
\end{equation}
given with an initial condition $u(x,0)=u_0(x)$, $x\in\mathcal{H}$, where $\mathcal{H}$ is a separable Hilbert space in $(a,b)$, with appropriate zero boundary conditions at $a$ and $b$. $\mathcal{K}$ is a differential operator, which need not be linear, of appropriate smoothness and we assume that \R{eq:1.1} is well posed in $\mathcal{H}$. 

While there are many ways to define a spectral method for the numerical solution of \R{eq:1.1} \cite{canuto06sm,gottlieb77nas,hesthaven07std,trefethen00smm}, we adopt in this paper a minimalist approach: {\em a spectral method is an orthonormal basis $\Phi=\{\varphi_n\}_{n\in\mathbb{Z}_+}$ of $\mathcal{H}$.\/} The main idea is to expand all the relevant quantities, inclusive of the solution, in the basis $\Phi$ -- in other words, a function $f\in\mathcal{H}$ is associated with a sequence $\hat{\MM{f}}\in\ell_2[\mathbb{Z}_+]$ such that 
\begin{displaymath}
  f(x)=\sum_{n=0}^\infty \hat{f}_n \varphi_n(x),\qquad x\in(a,b)
\end{displaymath}
in the weak-* topology of $\mathcal{H}$. (And the solution $u(x,t)$ of \R{eq:1.1} with the sum $\sum_{n=0}^\infty \hat{u}_n(t)\varphi_n(x)$.) Let $\mathcal{N}:\mathcal{H}\rightarrow\ell_2(\mathbb{Z}_+)$ be the {\em canonical map\/} taking $f$ to $\hat{\MM{f}}$: it follows from the Plancherel theorem that $\mathcal{N}$ is an isometric bijection and $\mathcal{N}^{-1}=\mathcal{N}^*$ where $\mathcal{N}^*$ is the adjoint. Therefore, in place of \R{eq:1.1}, we obtain a partial differential equation evolving in $\ell_2[\mathbb{Z}_+]$, namely
\begin{equation}
  \label{eq:1.2}
  \frac{\partial \hat{\MM{u}}}{\partial t}=\mathcal{N}{\mathcal K}(t,x,\mathcal{N}^*\hat{\MM{u}}),\qquad x\in(a,b),\quad t\geq0,
\end{equation}
with the initial condition $\hat{\MM{u}}_0=\mathcal{N}u_0$ and zero boundary conditions. We turn \R{eq:1.2} into a spectral method by restricting the underlying sequences to a finite-dimensional space, whereby standard Galerkin conditions result in a (finite-dimensional) system of ordinary differential equations (ODEs). The latter can be solved by a wide range of methods, some standard and other optimised to this setting.

A design of a good spectral method rests upon several imperatives, which are often in tension:
\begin{enumerate}
\item {\bf Stability:} The finite-dimensional ODE system derived from \R{eq:1.2} must be stable \cite{iserles09fna}. In a non-technical language, this means that  it is uniformly well posed as the  number of degrees of freedom becomes infinite, thereby maintaining the well-posedness of \R{eq:1.1}. Note that this is not an optional extra but a necessary condition for the method being viable as an approximation to the exact solution. 
\item {\bf Preservation of structure:} The equation \R{eq:1.1} often possesses structural features whose preservation under discretisation is essential for underlying applications. For example, the solution of the {\em linear Schr\"odinger equation\/} $\ii \partial u/\partial t=-\partial^2u/\partial x^2+V(x)u$, where $V$ is real, preserves the $\CC{L}_2$ norm (subject to periodic or Cauchy boundary conditions) and this is crucial to the quantum-mechanical interpretation of $|u(x,t)|^2$ as a probability distribution. Another example is the {\em diffusion equation\/} $\partial u/\partial t=\partial [a(x)\partial u/\partial x]/\partial x$, where $a>0$ and $u_0\geq0$, whose solution preserves non-negativity. Ideally, we wish to conserve such features under discretisation \cite{hairer06gni}.
\item {\bf The computation of the canonical map:} The implementation of a spectral methods requires, often repeatedly, to apply the canonical map $\mathcal{N}$, in other words to expand in the basis $\Phi$. While this is not the subject matter of this paper, we mention the existence of fast algorithms to this end \cite{olver20fau}.
\item {\bf Fast convergence:} The speed of convergence of an expansion in the basis $\Phi$ is crucial to the success of a spectral method, not least because the smaller the eventual system, the faster the numerical linear algebra. We will not pursue this issue further in this paper, while mentioning that the speed of convergence depends strongly on the nature of the Hilbert space $\mathcal{H}$ and on the function being approximated: clearly, we can expect much faster convergence for an analytic function, say, than for a function which is merely at $\CC{L}_2(a,b)$.
\item {\bf Numerical linear algebra:} Ultimately, the purpose of the exercise is to reduce analysis to linear algebra. The choice of the basis $\Phi$ has fundamental impact on the nature of the algebraic systems that occur once the ODE that we have derived from \R{eq:1.2} through Galerkin conditions is being computed. A well-known case is that of periodic boundary conditions, whereby the choice of a Fourier basis leads to fast numerical linear algebra using the Fast Fourier Transform. At its core, the purpose of this paper is to construct a framework for fast algebra for spectral methods in the presence of zero boundary conditions in a compact interval.
\end{enumerate}

In this paper our framework is restricted to univariate (in space) problems defined in compact intervals and to zero boundary conditions. The formalism of W-functions, adopted in the sequel as the organising principle of this paper, can be easily extended to parallelepipeds {\em via\/} tensor products, but also to simplexes \cite{gao25ima}. Cauchy problems call for different formalism, cf.\ for example \cite{iserles19oss}. Insofar as boundary conditions are concerned, in a periodic setting (and, more generally, on a torus) nothing beats Fourier expansions, while nonzero Dirichlet boundary conditions using W-functions require at the present state of knowledge an {\em ad hoc\/} reduction to zero boundary conditions \cite{gao25ima}. We examine solely the case of $\mathcal{H}=\CC{L}_2(a,b)$, both because its theory is more mature and since it is the right setting for a wide variety of problems: the diffusion equation dissipates in $\CC{L}_2$ while dispersive equations (e.g.\ linear and nonlinear Schr\"odinger, Korteweg--de Vries, Klein--Gordon and Dirac equations) conserve $\CC{L}_2$ (with appropriate boundary conditions).

\subsection{Differentiation matrices and W-functions}

Given any $\CC{C}^1$ orthonormal basis of the Hilbert space $\mathcal{H}$, there exists a linear map $\DDD$ taking $\Phi$ to $\Phi'=\{\varphi_n'\}_{n\in\mathbb{Z}_+}$,
\begin{displaymath}
  \varphi_m'=\sum_{n=0}^\infty \DDD_{m,n}\varphi_n,\quad m\in\mathbb{Z}_+,\qquad\mbox{where}\qquad \DDD_{m,n}=\langle \varphi_m',\varphi_n\rangle.
\end{displaymath}
The matrix $\DDD$ is called the {\em differentiation matrix\/} of $\Phi$ and it plays in the realm of $\ell_2[\mathbb{Z}_+]$ a role parallel to a derivative in $\mathcal{H}$,
\begin{displaymath}
  f=\sum_{n=0}^\infty \hat{f}_n\varphi_n\qquad\Rightarrow\qquad f'=\sum_{n=0}^\infty (\DDD\hat{\MM{f}})_n \varphi_n
\end{displaymath}
and similarly for higher derivatives. (In several dimensions we need `partial' differentiation matrices, cf.\ \cite{gao25ffs,gao25ima}.) Bearing in mind that our purpose is to approximate differential equations, hence derivatives, little surprise that a differentiation matrix is central to this endeavour. 

The derivative operator, accompanied by zero Dirichlet, periodic or Cauchy boundary conditions, is skew Hermitian with respect to the standard $\CC{L}_2$ inner product, $\langle f',g\rangle+\langle f,g'\rangle=0$, and it is highly beneficial for a differentiation matrix to inherit this feature, namely $\DDD+\DDD^*=O$. This is the case with the Fourier basis in the presence of periodic boundary conditions, while for the Cauchy problem all orthonormal systems with a tridiagonal, skew-Hermitian differentiation matrix have been characterised in \cite{iserles19oss}. Once $\DDD$ is skew-Hermitian, proofs of stability become much easier and often the method can be guaranteed to respect important invariants and geometric features.

Let $w$ be a weight function (a nonzero, nonnegative, integrable function with all its moments bounded) supported by the interval $(a,b)$. It thus generates an inner product $\langle f,g\rangle=\int_a^b f(x)g(x)w(x)\D x$ and a basis of orthonormal polynomials $\{p_n\}_{n\in\mathbb{Z}_+}$. We henceforth stipulate that $\mathcal{H}=\CC{L}_2(a,b)$ and follow \citeasnoun{iserles24ssm}, defining a set of so-called {\em W-functions\/} $\Phi=\{\varphi_n\}_{n\in\mathbb{Z}_+}$ by letting
\begin{equation}
  \label{eq:1.3}
  \varphi_n(x)=\sqrt{w(x)} p_n(x),\qquad n\in\mathbb{Z}_+.
\end{equation}
To impose zero boundary conditions we further stipulate that $w(a)=w(b)=0$, whereby $\Phi$ becomes a basis of $\CC{L}_2(a,b)$ with zero boundary conditions. It has been proved in \cite{iserles24ssm} that, subject to the above condition on the weight function, for $w\in\CC{C}^1$ the matrix $\DDD$ is skew symmetric and
\begin{equation}
  \label{eq:1.4}
  \DDD_{m,n}=\begin{case}
      \displaystyle\tfrac12 \int_a^b w'(x)p_m(x)p_n(x)\D x, & m\leq n-1,\\[8pt]
      0, & m=n,\\[4pt]
      \displaystyle-\tfrac12 \int_a^b w'(x)p_m(x)p_n(x)\D x, & m\geq n+1,
   \end{case} \qquad m,n\in\mathbb{Z}_+.
\end{equation}

\subsection{Semi-separability}

A familiar argument counterposes finite element and finite difference methods {\em vs\/} spectral methods: supposedly, finite element and finite difference methods lead to algebraic linear systems which, while very large, are also  sparse, lending themselves to fast linear algebra, while spectral methods result in algebraic systems which are much smaller, yet dense. Inasmuch as this is not the only major difference between these methods, it definitely looms large in many decisions on the `right' approach to a given problem. This argument is false.

The structure of the algebraic system that needs to be solved to time-step a truncated system \R{eq:1.2} depends in large measure upon the structure and sparsity of the differentiation matrix. 

In the presence of periodic boundary conditions a Fourier basis leads to a diagonal $\DDD$, while for Cauchy boundary conditions we can choose $\Phi$ that gives rise to a tridiagonal matrix: matrices of this kind that allow for an $N\times N$ truncation of the canonical map $\mathcal{N}$ to be computed in $\O{N\log N}$ operations have been characterised in \cite{iserles21fco}. However, once we consider zero Dirichlet boundary conditions, in general the matrix $\DDD$ in \R{eq:1.4} is dense. However, in the two cases considered in \cite{iserles24ssm}, namely $\mathcal{H}=\CC{L}_2(-1,1)$ with the {\em ultraspherical basis\/} $\{\PP_n^{(\alpha,\alpha)}\}_{n\in\mathbb{Z}_+}$, $\alpha>0$,\footnote{These polynomials need to be normalised to form an orthonormal basis.} and $\mathcal{H}=\CC{L}_2(0,\infty)$ with the {\em Laguerre basis\/} $\{\CC{L}_n^{(\alpha)}\}_{n\in\mathbb{Z}_+}$, $\alpha>0$,\footnote{Ditto.} the differentiation matrices $\DDD$ are known explicitly and they are semi-separable of rank~1.

We recall that a matrix is semi-separable of rank $r\geq1$ if any sub-matrix wholly above or wholly below the main diagonal is of rank $\leq r$ \cite{vandebril08mcs}. Once a matrix $\mathcal{A}$ is semi-separable of rank $r$, an $N\times N$ linear system of the form $\mathcal{A}\MM{v}=\MM{b}$ can be reduced by elementary operations to a banded matrix with $2r+1$ diagonals and  solved in $\O{N}$ operations. Likewise it takes $\O{N}$ operations to multiply such a matrix by a vector -- in other words, compute an expansion of a derivative from an expansion of a function in our setting. This makes for fast algebra!

It has been proved in \cite{iserles24ssm} that the Laguerre basis leads to a semi-separable differentiation matrix of rank~1. For ultraspherical basis we have $\DDD_{m,n}=0$ whenever $m+n$ is even, hence the task (e.g.\ computing a linear system) can be reduced to two problems, each of half length. These problems have matrices which are semi-separable of rank 1.

\subsection{A plan of this paper}

The main purpose of this paper is to consider the {\em Jacobi weight\/} $w(x)=(1-x)^\alpha(1+x)^\beta$, $x\in(-1,1)$, where, to ensure $w(\pm1)=0$, we take $\alpha,\beta>0$. (As a matter of fact, it is advantageous to take both $\alpha$ and $\beta$ as even positive integers, whereby the $\varphi_n$s are analytic and convergence is considerably more rapid \cite{iserles24ssm}, but this plays no further role in the current work.) This is for two reasons. The first is practical: to cater for the case when the number of zero boundary conditions from the left and the right is different. If, say, we wish to impose the boundary conditions $u(-1,t)=u(1,t)=\partial u(1,t)/\partial x=0$, the right choice is  $\alpha=4$, $\beta=2$: this means that the $\varphi_n$s are analytic and $\sqrt{w}$ vanishes to first order at $x=-1$ and second order at $x=+1$ . The second reason is theoretical, to explore further the structure of differentiation matrices for different choices of W-functions. 

In Section~2 we employ the theory of orthogonal polynomials to derive $\DDD$ explicitly for the Jacobi weight. This is used in Section~3 to prove that the differentiation matrix is semi-separable of rank 2. As a matter of fact, we accomplish much more, proving that (subject to a boundedness condition for infinite-dimensional matrices) a product of a semi-separable matrix of rank $s$ by a semi-separable matrix of rank $r$ is semi-separable of rank $\leq r+s$. This is of practical interest because, implementing spectral methods,   we often need to form products of differentiation matrices, e.g.\ to represent the Laplacian. Finally, in Section~4, we consider two speculative issues that follow from our analysis and might foster future research.

\section{An explicit form of the differentiation matrix}

It is convenient to list at the outset  all  facts about Jacobi polynomials that will be needed in the sequel. Thus, recall that the Jacobi polynomials $\PP_n^{(\alpha,\beta)}$ are orthogonal in $(-1,1)$ with respect to the measure $w_{\alpha,\beta}(x)=(1-x)^\alpha(1+x)^\beta$, where $\alpha,\beta>-1$,
\begin{equation}
  \label{eq:2.1}
  \int_{-1}^1 (1-x)^\alpha(1+x)^\beta \PP_m^{(\alpha,\beta)}(x)\PP_n^{(\alpha,\beta)}(x)\D x=\delta_{m,n} \kappa_n^{-2},
\end{equation}
where
\begin{equation}
  \label{eq:2.2}
  \kappa_n=\sqrt{\frac{(2n+\alpha+\beta+1)\G(n+\alpha+\beta+1)}{2^{\alpha+\beta+1}\G(n+\alpha+1)\G(n+\beta+1)}},\qquad m,n\in\mathbb{Z}_+
\end{equation}
\cite[Table 18.3.1]{dlmf}. Here $\delta_{m,n}$ is the delta of Kronecker: $\delta_{m,m}=1$ and $\delta_{m,n}=0$ for $m\neq n$. Jacobi polynomials obey a three-term recurrence relation, which we write in the form
\begin{Eqnarray}
  \nonumber
  &&2(n+1)(n+\alpha+\beta+1)(2n+\alpha+\beta)\PP_{n+1}^{(\alpha,\beta)}\\
  \nonumber
  &&\mbox{}-(\alpha^2-\beta^2)(2n+\alpha+\beta+1)\PP_n^{(\alpha,\beta)}\\
  \label{eq:2.3}
  &&\mbox{}+2(n+\alpha)(n+\beta)(2n+\alpha+\beta+2)\PP_{n-1}^{(\alpha,\beta)}\\
  \nonumber
  &=&(2n+\alpha+\beta)(2n+\alpha+\beta+1)(2n+\alpha+\beta+2)x\PP_n^{(\alpha,\beta)}
\end{Eqnarray}
\cite[Eqn 18.9.2]{dlmf}. (Here and elsewhere $\PP_{-1}^{(\alpha,\beta)}\equiv0$.) The underlying orthonormal Jacobi polynomials are $p_n=\kappa_n \PP_n^{(\alpha,\beta)}$, therefore the W-functions are
\begin{equation}
  \label{eq:2.4}
  \varphi_n(x)=(1-x)^{\alpha/2}(1+x)^{\beta/2}p_n(x)=\kappa_n (1-x)^{\alpha/2}(1+x)^{\beta/2}\PP_n^{(\alpha,\beta)}(x),\qquad n\in\mathbb{Z}_+
\end{equation}
-- recall that we need to restrict the range of parameters to $\alpha,\beta>0$ to ensure  skew-symmetry of the differentiation matrix -- while the differentiation matrix is
\begin{displaymath}
  \DDD_{m,n}=\int_{-1}^1 \varphi_m'(x)\varphi_n(x)\D x,\qquad m,n\in\mathbb{Z}_+.
\end{displaymath}
It is helpful to define by this stage the {\em pre-W-functions\/}
\begin{displaymath}
  \tilde{\varphi}_n(x)= (1-x)^{\alpha/2}(1+x)^{\beta/2}\PP_n^{(\alpha,\beta)}(x),\qquad n\in\mathbb{Z}_+
\end{displaymath}
and the {\em pre-differentiation matrix\/}
\begin{displaymath}
  \DDT_{m,n}=\int_{-1}^1 \tilde{\varphi}_m'(x)\tilde{\varphi}_n(x)\D x,\qquad m,n\in\mathbb{Z}_+,
\end{displaymath}
noting that 
\begin{equation}
  \label{eq:2.5}
  \DDD_{m,n}=\kappa_m\kappa_n \DDT_{m,n},\qquad m,n\in\mathbb{Z}_+.
\end{equation}

Finally, we use \cite[18.9.5]{dlmf} and $\PP_n^{(\alpha,\beta)}(x)=(-1)^n\PP_n^{(\beta,\alpha)}(-x)$ \cite[p.~256]{rainville60sf} to claim that
\begin{Eqnarray}
  \label{eq:2.6}
  (2n+\alpha+\beta)\PP_n^{(\alpha,\beta-1)}(x)&=&(n+\alpha+\beta)\PP_n^{(\alpha,\beta)}(x)+(n+\alpha)\PP_{n-1}^{(\alpha,\beta)}(x),\\
  \nonumber
  (2n+\alpha+\beta)\PP_n^{(\alpha-1,\beta)}(x)&=&(n+\alpha+\beta)\PP_n^{(\alpha,\beta)}(x)-(n+\beta)\PP_{n-1}^{(\alpha,\beta)}(x),\quad n\in\mathbb{Z}_+.
\end{Eqnarray}

We will derive the pre-differentiation matrix first and then use \R{eq:2.5} to deduce the explicit form of the differentiation matrix. We need to concern ourselves just with $m\geq n+1$, because the case $m\leq n$ follows by skew-symmetric completion. 

Our first task is to determine $\DDT_{m,0}$ for $m\in\mathbb{N}$. Recalling \R{eq:1.4}, we have
\begin{displaymath}
  \DDT_{m,n}=\tfrac12 \int_{-1}^1 w_{\alpha,\beta}'(x)\PP_m^{(\alpha,\beta)}(x)\PP_n^{(\alpha,\beta)}(x)\D x,\qquad m\leq n-1.
\end{displaymath}
However,
\begin{displaymath}
  w'_{\alpha,\beta}(x)=-\alpha w_{\alpha-1,\beta}(x)+\beta w_{\alpha,\beta-1}(x),
\end{displaymath}
and $\PP_0^{(\alpha,\beta)}\equiv1$, therefore, integrating by parts and exploiting $w_{\alpha,\beta}(\pm1)=0$,
\begin{Eqnarray}
  \nonumber
  \DDT_{m,0}&=&\int_{-1}^1 \frac{\D}{\D x}\!\left[w^{\frac12}_{\alpha,\beta}(x)\PP_m^{(\alpha,\beta)}(x)\right]\! w^{\frac12}_{\alpha,\beta}(x)\D x=-\frac12 \int_{-1}^1 w'_{\alpha,\beta} \PP_m^{(\alpha,\beta)}(x)\D x\\
  \nonumber
  &=&\frac{\alpha}{2}\int_{-1}^1 w_{\alpha-1,\beta}(x) \PP_m^{(\alpha,\beta)}(x)\D x-\frac{\beta}{2}\int_{-1}^1 w_{\alpha,\beta-1}(x)\PP_m^{(\alpha,\beta)}(x)\D x\\
  \label{eq:2.7}
  &=&\frac{\alpha}{2} \GG{t}_m^{(\alpha,\beta)}-\frac{\beta}{2}\GG{s}_m^{(\alpha,\beta)}
\end{Eqnarray}
where
\begin{displaymath}
  \GG{t}_m^{(\alpha,\beta)}=\int_{-1}^1 w_{\alpha-1,\beta} \PP_m^{(\alpha,\beta)}(x)\D x,\quad \GG{s}_m^{(\alpha,\beta)}=\int_{-1}^1 w_{\alpha,\beta-1}\PP_m^{(\alpha,\beta)}(x)\D x,\qquad\! n\in\mathbb{Z}_+.
\end{displaymath}

\begin{lemma}
  It is true that
  \begin{equation}
    \label{eq:2.8}
    \GG{t}_m^{(\alpha,\beta)}=2^{\alpha+\beta}\frac{\G(\alpha)\G(m+\beta+1)}{\G(m+\alpha+\beta+1)},\quad \GG{s}_m^{(\alpha,\beta)}=(-1)^m 2^{\alpha+\beta} \frac{\G(m+\alpha+1)\G(\beta)}{\G(m+\alpha+\beta+1)}
  \end{equation}
  for all $m\in\mathbb{Z}_+$.
\end{lemma}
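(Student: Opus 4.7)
My plan is to use relation \R{eq:2.6}, together with orthogonality of Jacobi polynomials, to reduce the evaluation of $\GG{t}_m^{(\alpha,\beta)}$ to a one-step recursion in $m$, and then to obtain $\GG{s}_m^{(\alpha,\beta)}$ from a simple symmetry argument.

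Concretely, I would rearrange the second identity in \R{eq:2.6} as
\begin{displaymath}
  (m+\alpha+\beta)\PP_m^{(\alpha,\beta)}(x) = (2m+\alpha+\beta)\PP_m^{(\alpha-1,\beta)}(x) + (m+\beta)\PP_{m-1}^{(\alpha,\beta)}(x),
\end{displaymath}
then multiply by $w_{\alpha-1,\beta}(x)=(1-x)^{\alpha-1}(1+x)^\beta$ and integrate over $(-1,1)$. Since $\PP_0^{(\alpha-1,\beta)}\equiv 1$ is orthogonal to $\PP_m^{(\alpha-1,\beta)}$ for $m\geq 1$ with respect to the weight $w_{\alpha-1,\beta}$ --- which, thanks to $\alpha>0$, is a bona-fide Jacobi weight --- the first term on the right contributes nothing, and the contribution of $\PP_{m-1}^{(\alpha,\beta)}$ against $w_{\alpha-1,\beta}$ is, by definition, $\GG{t}_{m-1}^{(\alpha,\beta)}$. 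This leaves the recursion
\begin{displaymath}
  (m+\alpha+\beta)\GG{t}_m^{(\alpha,\beta)} = (m+\beta)\GG{t}_{m-1}^{(\alpha,\beta)},\qquad m\geq 1.
\end{displaymath}

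Iterating down to $m=0$ yields
\begin{displaymath}
  \GG{t}_m^{(\alpha,\beta)} = \GG{t}_0^{(\alpha,\beta)}\prod_{k=1}^m \frac{k+\beta}{k+\alpha+\beta} = \GG{t}_0^{(\alpha,\beta)}\frac{\G(m+\beta+1)\G(\alpha+\beta+1)}{\G(\beta+1)\G(m+\alpha+\beta+1)},
\end{displaymath}
and the base case is a Beta integral, namely $\GG{t}_0^{(\alpha,\beta)} = 2^{\alpha+\beta}\G(\alpha)\G(\beta+1)/\G(\alpha+\beta+1)$, obtained through the substitution $t=(1+x)/2$. Substituting this back collapses the product to the first formula in \R{eq:2.8}.

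For $\GG{s}_m^{(\alpha,\beta)}$ I would apply the change of variable $x\mapsto -x$ to the defining integral and invoke the reflection identity $\PP_m^{(\alpha,\beta)}(-x)=(-1)^m\PP_m^{(\beta,\alpha)}(x)$ already cited in the excerpt; this gives $\GG{s}_m^{(\alpha,\beta)} = (-1)^m \GG{t}_m^{(\beta,\alpha)}$, and the first formula of \R{eq:2.8} with the roles of $\alpha$ and $\beta$ exchanged then produces the second. The only step calling for any care is the orthogonality argument, which requires $\alpha-1>-1$ (that is, the standing hypothesis $\alpha>0$); beyond keeping track of indices and a base-case evaluation, I anticipate no real obstacle.
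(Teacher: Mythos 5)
Your proposal is correct and follows essentially the same route as the paper: the second contiguous relation in \R{eq:2.6} combined with orthogonality against the weight $w_{\alpha-1,\beta}$ yields the one-step recursion $(m+\alpha+\beta)\GG{t}_m^{(\alpha,\beta)}=(m+\beta)\GG{t}_{m-1}^{(\alpha,\beta)}$, the base case is the same Beta integral, and the second formula is obtained from the first via the reflection $\PP_m^{(\alpha,\beta)}(x)=(-1)^m\PP_m^{(\beta,\alpha)}(-x)$. Your explicit reduction $\GG{s}_m^{(\alpha,\beta)}=(-1)^m\GG{t}_m^{(\beta,\alpha)}$ is a slightly cleaner way of phrasing the paper's symmetry step, but it is not a different argument.
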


\begin{proof}
  We use \R{eq:2.6} and orthogonality \R{eq:2.1} to deduce that
  \begin{Eqnarray*}
    &&(m+\alpha+\beta)\int_{-1}^1 w_{\alpha-1,\beta}(x)\PP_m^{(\alpha,\beta)}(x)\D x-(m+\beta)\int_{-1}^1 w_{\alpha-1,\beta}(x)\PP_{m-1}^{(\alpha,\beta)}(x)\D x\\
    &&=(2m+\alpha+\beta)\int_{-1}^1 w_{\alpha-1,\beta}\PP_m^{(\alpha-1,\beta)}(x)\D x=0,
  \end{Eqnarray*}
  therefore
  \begin{displaymath}
    (m+\alpha+\beta)\GG{t}_m^{(\alpha,\beta)}=(m+\beta)\GG{t}_{m-1}^{(\alpha,\beta)}.
  \end{displaymath}
  Since, by direct calculation,
  \begin{displaymath}
    \GG{t}_0^{(\alpha,\beta)}=2^{\alpha+\beta} \frac{\G(\alpha)\G(\beta+1)}{\G(\alpha+\beta+1)},
  \end{displaymath}
  the expression for $\GG{t}_m^{(\alpha,\beta)}$ follows by induction.
  
  The result for $\GG{s}_m^{(\alpha,\beta)}$ now follows because $\PP_m^{(\alpha,\beta)}(x)=(-1)^m\PP_m^{(\beta,\alpha)}(-x)$, while replacing the first identity in \R{eq:2.6} by the second.
\end{proof}

Recall that the {\em Pochhammer symbol\/} is defined for any $z\in\mathbb{C}$ and $m\in\mathbb{Z}_+$ by
\begin{displaymath}
  (z)_m=\prod_{k=0}^{m-1} (z+k)=\frac{\G(z+m)}{\G(z)}
\end{displaymath}
(the rightmost equality makes sense only once $z$ is neither zero nor a negative integer). It now follows from \R{eq:2.8} that
\begin{equation}
  \label{eq:2.9}
  \DDT_{m,0}=2^{\alpha+\beta-1}\frac{\G(\alpha+1)\G(\beta+1)}{\G(\alpha+\beta+m+1)}[(\beta+1)_m- (-1)^m(\alpha+1)_m],\qquad m\in\mathbb{Z}_+.
\end{equation}

We have the beginning of a thread that will lead us to all $\DDT_{m,n}$s -- at the first instance for $m\geq n+1$ and then, by skew symmetry, to the remainder of the pre-differentiation matrix. We proceed similarly to \cite{gao23rrg}, commencing from the three-term recurrence \R{eq:2.3}. Multiplying by $w_{\alpha,\beta}'\PP_m^{(\alpha,\beta)}/2$ and integrating in $(-1,1)$, recalling that $m\geq n+1$ we obtain
\begin{Eqnarray*}
  &&-\tfrac12 (2n+\alpha+\beta)(2n+\alpha+\beta+1)(2n+\alpha+\beta+2) \int_{-1}^1 w'_{\alpha,\beta}x\PP_m^{(\alpha,\beta)}\PP_n^{(\alpha,\beta)}\D x\\
  &=&2(n+1)(n+\alpha+\beta+1)(2n+\alpha+\beta)\DDT_{m,n+1}\\
  &&\mbox{}-(\alpha^2-\beta^2)(2n+\alpha+\beta+1)\DDT_{m,n}\\
  &&\mbox{}+2(n+\alpha)(n+\beta)(2n+\alpha+\beta+2)\DDT_{m,n-1}.
\end{Eqnarray*}
Likewise, by symmetry,
\begin{Eqnarray*}
  &&-\tfrac12 (2m+\alpha+\beta)(2m+\alpha+\beta+1)(2m+\alpha+\beta+2) \!\int_{-1}^1\! w'_{\alpha,\beta}x\PP_m^{(\alpha,\beta)}\PP_n^{(\alpha,\beta)}\D x\!\\
  &=&2(m+1)(m+\alpha+\beta+1)(2m+\alpha+\beta)\DDT_{m+1,n}\\
  &&\mbox{}-(\alpha^2-\beta^2)(2m+\alpha+\beta+1)\DDT_{m,n}\\
  &&\mbox{}+2(m+\alpha)(m+\beta)(2m+\alpha+\beta+2)\DDT_{m-1,n}.
\end{Eqnarray*}
Comparing $\int_{-1}^1 xw'_{\alpha,\beta}(x)\PP_m^{(\alpha,\beta)}(x)\PP_n^{(\alpha,\beta)}(x)\D x$ in both expressions, we deduce the bilateral recurrence relation
\begin{equation}
  \label{eq:2.10}
  c_n\DDT_{m,n-1}+d_n\DDT_{m,n}+e_n\DDT_{m,n+1}=c_m\DDD_{m-1,n}+d_m\DDT_{m,n}+e_m\DDT_{m+1,n},
\end{equation}
where
\begin{Eqnarray*}
  c_n&=&\frac{(n+\alpha)(n+\beta)}{(2n+\alpha+\beta)(2n+\alpha+\beta+1)},\\
  d_n&=&-\frac{\alpha^2-\beta^2}{2} \frac{1}{(2n+\alpha+\beta)(2n+\alpha+\beta+2)},\\
  e_n&=&\frac{(n+1)(n+\alpha+\beta+1)}{(2n+\alpha+\beta+1)(2n+\alpha+\beta+2)}.
\end{Eqnarray*}

\begin{theorem}
  \label{th:2.2}
  The differentiation matrix for the Jacobi W-functions is
  \begin{Eqnarray}
    \label{eq:2.11}
    &&\DDD_{m,n}=\frac{1}{4m!} \sqrt{\frac{\G(m+\alpha+\beta+1)}{\G(n+\alpha+\beta+1)} (2n+\alpha+\beta+1)(2m+\alpha+\beta+1)}\hspace*{40pt}\\
    \nonumber
    &&\mbox{}\times\!\left[\sqrt{\frac{\G(m\!+\!\alpha\!+\!1)\G(n\!+\!\beta\!+\!1)}{\G(n\!+\!\alpha\!+\!1)\G(m\!+\!\beta\!+\!1)}}-(-1)^{n-m}\sqrt{\frac{\G(n\!+\!\alpha\!+\!1)\G(m\!+\!\beta\!+\!1)}{\G(m\!+\!\alpha\!+\!1)\G(n\!+\!\beta\!+\!1)}}\right]
  \end{Eqnarray}
  for $m,n\in\mathbb{Z}_+$, $m\leq n-1$.
\end{theorem}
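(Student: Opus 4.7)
The plan is to establish (2.11) by induction on $n$, using the bilateral recurrence (2.10) together with the explicit first column (2.9) and the skew-symmetry $\DDD_{m,n} = -\DDD_{n,m}$. Formula (2.11) reads like an ansatz suggested by (2.9): its bracket generalises the sign-alternating combination $(\beta+1)_m - (-1)^m(\alpha+1)_m$ from the first column, while the square-root prefactor absorbs the normalisation constants $\kappa_m\kappa_n$ that convert $\DDT$ to $\DDD$ via (2.5).

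The base case is to specialise (2.11) at $n=0$ -- or equivalently at $m=0$ and then invoke skew-symmetry to reach the lower triangle -- and compare with $\kappa_m\kappa_0\DDT_{m,0}$ computed from (2.9). The key algebraic step is to rewrite the bracket as $\sqrt{P/Q} - (-1)^{n-m}\sqrt{Q/P} = (P - (-1)^{n-m}Q)/\sqrt{PQ}$, where $P$ and $Q$ denote the appropriate numerator and denominator gamma products; at $m=0$ this collapses to $[(\beta+1)_n - (-1)^n(\alpha+1)_n]$ times a gamma-ratio that cancels cleanly against the prefactor, reproducing (2.9). For the inductive step, I would rearrange (2.10) into a relation expressing $\DDT_{m,n+1}$ in terms of $\DDT_{m,n-1}$, $\DDT_{m,n}$ and $\DDT_{m\pm 1,n}$, all of which belong to columns of index $\leq n$. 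The case $n=0$ of (2.10), using $\DDT_{m,-1}\equiv 0$, yields column $n=1$, after which successive columns propagate; off-diagonal complications ($m=0$ or $m=n$) are handled by skew-symmetry and the vanishing diagonal.

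The main obstacle will be the algebraic verification that the closed form (2.11) satisfies (2.10). I would write the bracket as $A_{m,n} - (-1)^{n-m}A_{m,n}^{-1}$, so that under either shift $n \mapsto n\pm 1$ or $m \mapsto m\pm 1$ the second piece picks up a sign while the first does not. The recurrence (2.10) then decouples into two independent scalar identities: one involving only the $A$-pieces with coefficients $(c_n,d_n,e_n)$ and $(c_m,d_m,e_m)$, and a companion identity for the $A^{-1}$-pieces with some of the signs on $d_n, d_m$ flipped. Each identity, after clearing the square-root prefactor (whose $n$- and $m$-shifts factor out as explicit rational quantities of the form $\sqrt{(n+\alpha+1)/(n+\beta+1)}$ and their cousins) and the common factor $\sqrt{\G(m+\alpha+\beta+1)/\G(n+\alpha+\beta+1)}$, reduces to a polynomial identity in $\alpha, \beta, m, n$ that must be checked against the defining formulae for $c_n, d_n, e_n$. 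This is the bookkeeping-heavy step, parallel in style to the recurrence manipulations of \cite{gao23rrg}; the symmetric $\alpha \leftrightarrow \beta$ structure of (2.11) essentially halves the work, since the $A$- and $A^{-1}$-identities are exchanged by swapping $\alpha$ with $\beta$.
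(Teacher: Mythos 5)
Your proposal is correct and follows essentially the same route as the paper: the closed form is pinned down by the bilateral recurrence (2.10) for $m\geq n+1$ together with the boundary data $\widetilde{\mathcal{D}}_{m,-1}=0$, the first column (2.9) and the vanishing diagonal, verified by direct substitution of the ansatz and then converted to $\mathcal{D}$ via (2.5) and skew-symmetry. Your extra organisational devices (column-by-column induction, which is legitimate since $e_n\neq0$, and splitting the bracket into the $A$ and $A^{-1}$ pieces exchanged by $\alpha\leftrightarrow\beta$) merely make explicit the ``simple algebraic manipulation'' the paper leaves to the reader.
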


\begin{proof}
  We use the recurrence \R{eq:2.10} for $m\geq n+1$ with the boundary conditions $\DDT_{m,-1}=0$, $\DDT_{m,0}$ given by \R{eq:2.9} (and which is consistent with \R{eq:2.11}) and $\DDT_{m,m}=0$. Direct substitution, accompanied by simple algebraic manipulation, yields
  \begin{Eqnarray*}
     \DDT_{m,n}&=&2^{\alpha+\beta-1} \frac{\G(m+\alpha+1)\G(m+\beta+1)}{m!\G(n+\alpha+\beta+1)} [(m+\beta+1)_{n-m}\\
     &&\hspace*{20pt}\mbox{}-(-1)^{n-m}(m+\alpha+1)_{n-m}]\\
     &=&2^{\alpha+\beta-1}\frac{\G(m\!+\!\alpha\!+\!1)\G(n\!+\!\beta\!+\!1)}{m!\G(n+\alpha+\beta+1)-(-1)^{n-m}\G(n\!+\!\alpha\!+\!1)\G(m\!+\!\beta\!+\!1)} 
   \end{Eqnarray*}
   for all $m\geq n+1$. To conclude the proof we use \R{eq:2.5}, followed by further straightforward algebra.
\end{proof}

Observe that for $\alpha=\beta$ we have $\DDT_{m,n}=0$ whenever $m+n$ is even, as we already know from \cite{iserles24ssm}. 

Once $\alpha,\beta\in\mathbb{N}$ \R{eq:2.11} simplifies to
\begin{Eqnarray*}
  \DDD_{m,n}&=&\frac{1}{4m!}\sqrt{\frac{(m+\alpha+\beta)!(2n+\alpha+\beta+1)(2m+\alpha+\beta+1)}{(n+\alpha+\beta)!}}\\
  &&\mbox{}\times\!\left[\sqrt{\frac{(m+\alpha)!(n+\beta)!}{(n+\alpha)!(m+\beta)!}}-(-1)^{n-m}\sqrt{\frac{(n+\alpha)!(m+\beta)!}{(m+\alpha)!(n+\beta)!}}\right]
\end{Eqnarray*}
for $m\leq n-1$, with skew-symmetric completion, by virtue of the identity $\G(\gamma+1)=\gamma!$ for $\gamma\in\mathbb{N}$.

\section{Semi-separability}

It is elementary that the matrix $A$ is semi-separable of rank $r$  only if there exist vectors $\GG{a}^{[i]},\GG{b}^{[i]},\GG{c},\GG{d}^{[i]},\GG{e}^{[i]}$, $i=1,\ldots,r$ of commensurate length (in our case, in $\ell_\infty[\mathbb{Z}_+]$) such that
\begin{equation}
  \label{eq:3.1}
  A_{m,n}=\begin{case}
     \displaystyle \sum_{i=1}^r \GG{a}_m^{[i]}\GG{b}_n^{[i]}, & m\leq n-1,\\
     \GG{c}_n, & m=n,\\
     \displaystyle \sum_{i=1}^r \GG{d}_m^{[i]}\GG{e}_n^{[i]}, & m\geq n+1
  \end{case}
\end{equation}
and the matrices $[\GG{a}^{[1]},\ldots,\GG{a}^{[r]}]$, $[\GG{b}^{[1]},\ldots,\GG{b}^{[r]}]$, $[\GG{d}^{[1]},\ldots,\GG{d}^{[r]}]$ and $[\GG{e}^{[1]},\ldots,\GG{e}^{[r]}]$ are of full rank. This follows at once from \cite{vandebril08mcs}. Further simplifications  occur once $A$ is skew-symmetric, whereby it is enough to specify just the vectors $\GG{a}^{[i]}$ and $\GG{b}^{[i]}$, while $\GG{c}=\MM{0}$, $\GG{d}^{[i]}=\GG{b}^{[i]}$ and $\GG{e}^{[i]}=-\GG{a}^{[i]}$. 

\begin{theorem}
  \label{thm:3.1}
  The differentiation matrix $\DDD$ corresponding to the Jacobi measure with $\alpha,\beta>0$ is semi-separable of rank~2 with
  \begin{Eqnarray*}
  \GG{a}_m^{[1]}&=&\frac{(-1)^{m-1}}{2m!} \sqrt{\frac{(2m+\alpha+\beta+1)\G(m+\beta+1)\G(m+\alpha+\beta+1)}{\G(m+\alpha+1)}},\\
  \GG{b}_n^{[1]}&=&\frac{(-1)^n}{2}\sqrt{\frac{(2n+\alpha+\beta+1)\G(n+\alpha+1)}{\G(n+\beta+1)\G(n+\alpha+\beta+1)}},\\
  \GG{a}_m^{[2]}&=&\frac{1}{2m!} \sqrt{\frac{(2m+\alpha+\beta+1)\G(m+\alpha+1)\G(m+\alpha+\beta+1)}{\G(m+\beta+1)}},\\
  \GG{b}_n^{[2]}&=&\frac12 \sqrt{\frac{(2n+\alpha+\beta+1)\G(n+\beta+1)}{\G(n+\alpha+1)\G(n+\alpha+\beta+1)}}.
\end{Eqnarray*}
\end{theorem}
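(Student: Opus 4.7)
The plan is to read the decomposition directly off the explicit formula (2.11) of Theorem~2.2. Each term in the square bracket on the right-hand side of (2.11) depends on $m$ and $n$ only through Gamma functions of the form $\Gamma(m+\cdot)$ or $\Gamma(n+\cdot)$, and all of the variables of the prefactor likewise split cleanly into a pure-$m$ factor and a pure-$n$ factor. So after distributing the prefactor across the two terms in the bracket, each term becomes a product of a function of $m$ alone and a function of $n$ alone — i.e.\ a rank-1 contribution. This will give $\DDD_{m,n}=\GG{a}_m^{[1]}\GG{b}_n^{[1]}+\GG{a}_m^{[2]}\GG{b}_n^{[2]}$ for $m\le n-1$.

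Concretely, I would first rewrite the prefactor
\begin{displaymath}
\frac{1}{4m!}\sqrt{\frac{\G(m+\alpha+\beta+1)}{\G(n+\alpha+\beta+1)}(2n+\alpha+\beta+1)(2m+\alpha+\beta+1)}
\end{displaymath}
as $P_m Q_n$, where $P_m=\frac{1}{2m!}\sqrt{(2m+\alpha+\beta+1)\G(m+\alpha+\beta+1)}$ and $Q_n=\frac{1}{2}\sqrt{(2n+\alpha+\beta+1)/\G(n+\alpha+\beta+1)}$. Multiplying $P_m Q_n$ by $\sqrt{\G(m+\alpha+1)\G(n+\beta+1)/[\G(n+\alpha+1)\G(m+\beta+1)]}$ and regrouping the $m$- and $n$-Gamma factors gives exactly $\GG{a}_m^{[2]}\GG{b}_n^{[2]}$. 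For the subtracted second term I would use the identity $-(-1)^{n-m}=(-1)^{m-1}(-1)^n$ to split the sign between the two factors; then the analogous regrouping produces $\GG{a}_m^{[1]}\GG{b}_n^{[1]}$. This is essentially bookkeeping with Gamma ratios and the single trick is the sign split.

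Once the above-diagonal factorisation is in place, the sub-diagonal part is fixed by the skew-symmetry of $\DDD$ noted in the discussion preceding the theorem: setting $\GG{c}=\MM{0}$, $\GG{d}^{[i]}=\GG{b}^{[i]}$ and $\GG{e}^{[i]}=-\GG{a}^{[i]}$ automatically yields the required form (3.1) below the diagonal. This establishes semi-separability of rank at most 2. To conclude rank exactly 2, I would observe that the two columns $[\GG{a}^{[1]},\GG{a}^{[2]}]$ are linearly independent because $\GG{a}^{[1]}_m$ carries the factor $(-1)^{m-1}$ while $\GG{a}^{[2]}_m$ does not, so no scalar multiple of one equals the other; the same argument applies to the $\GG{b}^{[i]}$.

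The only conceivable obstacle is purely computational: matching the square-root prefactors in (2.11) to the exact Gamma factors inside $\GG{a}_m^{[i]}$ and $\GG{b}_n^{[i]}$, particularly making sure the $\Gamma(m+\alpha+1)$ and $\Gamma(m+\beta+1)$ that appear inside the bracket combine correctly with $\Gamma(m+\alpha+\beta+1)$ from the prefactor. However, since all $m$-dependent and $n$-dependent Gammas are already separated by index in the displayed formula, this is bounded algebra rather than genuine analysis, and no new tools (recurrence, orthogonality, generating functions) are needed beyond those used to prove Theorem~2.2.
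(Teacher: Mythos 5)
Your proposal is correct and is essentially the paper's proof: the paper simply says the result ``follows at once from Theorem~2.2 by inspection,'' and your splitting of the prefactor into $P_mQ_n$, the sign identity $-(-1)^{n-m}=(-1)^{m-1}(-1)^n$, and the skew-symmetric completion are exactly the bookkeeping that inspection entails. Your added remark on linear independence of the generating vectors (needed for rank exactly~2) is a small but legitimate supplement that the paper leaves implicit.
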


\begin{proof}
  Follows at once from Theorem~2.2 by inspection.
\end{proof}

Going back to the motivation of this paper, spectral methods for time-dependent PDEs, we wish to consider not just the differentiation matrix but also its square: if $\DDD$ corresponds to $\partial/\partial x$ in the `parameter space', $\DDD^2$ corresponds to the second derivative -- in a multivariate setting to the Laplacian. While the first derivative (or a gradient) are skew-Hermitian in an appropriate Hilbert space $\mathcal{H}$, the Laplacian is negative definite and, for all the reasons elaborated upon in Section~1, it is a most welcome news that skew-Hermitian $\DDD$ implies that $\DDD^2$ is negative semidefinite.\footnote{In general, $\DDD^{2n+1}$ is skew-symmetric, $\DDD^{2n+2}$ negative semidefinite and $\DDD^{4n}$ positive semi-definite for all $n\in\mathbb{Z}_+$.} But is it semi-separable and, if so, of which rank?

Before we address this question, we need to explore further general properties of semi-separable matrices.

\begin{lemma}
  \label{lemma:3.2}
  Let $\mathcal{A}$ and $\mathcal{B}$ be two skew-symmetric semi-separable matrices of rank 1,
  \begin{equation}
    \label{eq:3.2}
    \mathcal{A}_{m,n}=\begin{case}
      \GG{a}_m\GG{b}_n, & m\leq n-1,\\[5pt]
      \GG{f}_n, & m=n,\\[5pt]
      \GG{d}_m\GG{e}_n, & m\geq n+1,
    \end{case}\qquad 
    \mathcal{B}_{m,n}=\begin{case}
      \GG{p}_m\GG{q}_n, & m\leq n-1,\\[5pt]
      \GG{r}_n, & m=n,\\[5pt]
      \GG{s}_m\GG{t}_n, & m\geq n+1,
    \end{case}
  \end{equation}
  If the matrices are infinite dimensional, we assume in addition that 
  \begin{equation}
     \label{eq:3.3}
     \left|\sum_{k=0}^\infty \GG{b}_k\GG{s}_k\right|<\infty.
  \end{equation}
  Then $\mathcal{C}=\mathcal{A}\mathcal{B}$ is semi-separable of rank~2 with
  \begin{Eqnarray*}
    \GG{a}_m^{[1]}&=&\GG{d}_m\sum_{k=0}^{m-1}\GG{e}_k\GG{p}_k+\GG{f}_m\GG{p}_m-\GG{a}_m\sum_{k=0}^m \GG{b}_k\GG{p}_k,\qquad 
    \GG{b}_n^{[1]}=\GG{q}_n,\\
    \GG{a}_m^{[2]}&=&\GG{a}_m,\qquad
    \GG{b}_n^{[2]}=\GG{q}_n\sum_{k=0}^{n-1}\GG{b}_k\GG{p}_k+\GG{b}_n\GG{r}_n+\GG{t}_n\sum_{k=n+1}^\infty \GG{b}_k\GG{s}_k,\\[4pt]
    \GG{c}_n&=&\GG{d}_n\GG{q}_n\sum_{k=0}^{n-1}\GG{e}_k\GG{p}_k+\GG{f}_n\GG{r}_n+\GG{a}_n\GG{t}_n\sum_{k=n+1}^\infty \GG{b}_k\GG{s}_k,\\[4pt]
    \GG{d}_m^{[1]}&=&\GG{d}_m,\qquad \GG{e}_n^{[1]}=\GG{q}_n\sum_{k=0}^{n-1}\GG{e}_k\GG{p}_k+\GG{e}_n\GG{r}_n-\GG{t}_n\sum_{k=0}^n \GG{e}_k\GG{s}_k,\\
    \GG{d}_m^{[2]}&=&\GG{d}_m\sum_{k=0}^{m-1}\GG{e}_k\GG{s}_k+\GG{f}_m\GG{s}_m+\GG{a}_m\sum_{k=m+1}^\infty \GG{b}_k\GG{s}_k,\qquad \GG{e}_n^{[2]}=\GG{t}_n, \qquad m,n\in\mathbb{Z}_+.
  \end{Eqnarray*}
\end{lemma}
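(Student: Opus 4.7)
The plan is to compute the entry $\mathcal{C}_{m,n} = \sum_{k=0}^\infty \mathcal{A}_{m,k}\mathcal{B}_{k,n}$ directly from the rank-1 representations and then pattern-match the result against the semi-separable template. The only idea is to split the summation range at $k = m$, where the formula for $\mathcal{A}_{m,k}$ in \R{eq:3.2} switches, and at $k = n$, where the formula for $\mathcal{B}_{k,n}$ switches.

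In the above-diagonal case $m \leq n-1$, the five regions $0 \leq k \leq m-1$, $k = m$, $m+1 \leq k \leq n-1$, $k = n$ and $k \geq n+1$ each contribute a term with an $m$-dependent factor, an $n$-dependent factor and a $k$-dependent summand. Collecting by $\GG{q}_n$ (the unique $n$-factor produced whenever $\mathcal{B}_{k,n}$ comes from its upper-triangular piece) and $\GG{t}_n$ (from the lower piece) yields
\begin{displaymath}
  \mathcal{C}_{m,n} = \GG{q}_n\Bigl[\GG{d}_m\sum_{k=0}^{m-1}\GG{e}_k\GG{p}_k + \GG{f}_m\GG{p}_m + \GG{a}_m\sum_{k=m+1}^{n-1}\GG{b}_k\GG{p}_k\Bigr] + \GG{a}_m\Bigl[\GG{b}_n\GG{r}_n + \GG{t}_n\sum_{k=n+1}^\infty\GG{b}_k\GG{s}_k\Bigr].
\end{displaymath}
To coerce this into a sum of two outer products with $\GG{b}_n^{[1]}=\GG{q}_n$ and $\GG{a}_m^{[2]}=\GG{a}_m$ one adds and subtracts $\GG{a}_m\GG{q}_n\sum_{k=0}^m\GG{b}_k\GG{p}_k$: the addition converts the partial sum $\sum_{k=m+1}^{n-1}$ in the first bracket into $\sum_{k=0}^{n-1}$, which together with $\GG{b}_n\GG{r}_n$ and the tail $\GG{t}_n\sum_{k=n+1}^\infty\GG{b}_k\GG{s}_k$ is exactly $\GG{b}_n^{[2]}$, while the subtraction attaches the correction $-\GG{a}_m\sum_{k=0}^m\GG{b}_k\GG{p}_k$ to the first bracket, which then equals $\GG{a}_m^{[1]}$.

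The case $m \geq n+1$ runs in parallel: split at $k = n$ and $k = m$, collect by $\GG{d}_m$ and $\GG{t}_n$, and add and subtract $\GG{d}_m\GG{t}_n\sum_{k=0}^n\GG{e}_k\GG{s}_k$ to obtain $\mathcal{C}_{m,n} = \GG{d}_m^{[1]}\GG{e}_n^{[1]} + \GG{d}_m^{[2]}\GG{e}_n^{[2]}$. The diagonal $m = n$ uses only three regions and immediately reproduces the stated $\GG{c}_n$. The only analytic point is the infinite range $k \geq \max(m,n) + 1$, on which the summand is proportional to $\GG{b}_k\GG{s}_k$; its tail is finite by hypothesis \R{eq:3.3}, which simultaneously legitimises the add-and-subtract manipulations because every correction is then a well-defined finite quantity. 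Once the identity $\mathcal{C}_{m,n} = \GG{a}_m^{[1]}\GG{b}_n^{[1]} + \GG{a}_m^{[2]}\GG{b}_n^{[2]}$ holds for $m \leq n-1$, every strictly upper sub-matrix of $\mathcal{C}$ is a sum of two outer products and hence of rank at most~2; the same applies below the diagonal, so $\mathcal{C}$ is semi-separable of rank at most~2 with the generators listed.

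The proof is essentially algebraic bookkeeping; the only real judgement call is the choice of add-and-subtract correction, selected precisely so that the remaining $k$-sums re-telescope into the ranges $[0,m-1]$, $[0,n-1]$ and $[n+1,\infty)$ demanded by the stated generators rather than leaving a third, irreducible outer product.
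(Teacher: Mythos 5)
Your proof is correct and follows essentially the same route as the paper's: split $\sum_k \mathcal{A}_{m,k}\mathcal{B}_{k,n}$ at $k=m$ and $k=n$, then rewrite the middle partial sum as a difference of prefix sums (your add-and-subtract of $\GG{a}_m\GG{q}_n\sum_{k=0}^m\GG{b}_k\GG{p}_k$) so that the terms regroup into the two stated outer products, with the cases $m=n$ and $m\geq n+1$ handled identically. Incidentally, your tail term $\GG{a}_m\GG{t}_n\sum_{k=n+1}^\infty\GG{b}_k\GG{s}_k$ is the correct one -- the paper's displayed computation writes $\GG{b}_k\GG{p}_k\GG{q}_n$ in the range $k\geq n+1$ by a slip, although its final generators agree with yours.
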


\begin{proof}
  By direct calculation. Thus, for $m\leq n-1$,
  \begin{Eqnarray*}
  \mathcal{C}_{m,n}&=&\sum_{k=0}^\infty \mathcal{A}_{m,k}\mathcal{B}_{k,n}=\sum_{k=0}^{m-1}\mathcal{A}_{m,k}\mathcal{B}_{k,n}+\mathcal{A}_{m,m}\mathcal{B}_{m,n}+\sum_{k=m+1}^{n-1}\mathcal{A}_{m,k}\mathcal{B}_{k,n}\\
  &&\mbox{}+\mathcal{A}_{m,n}\mathcal{B}_{n,n}+\sum_{k=n+1}^\infty \mathcal{A}_{m,k}\mathcal{B}_{k,n}\\
  &=&\sum_{k=0}^{m-1}\GG{d}_m\GG{e}_k\GG{p}_k\GG{q}_n+\GG{f}_m\GG{p}_m\GG{q}_n+\GG{a}_m\GG{q}_n\!\left(\sum_{k=0}^{n-1} \GG{b}_k\GG{p}_k-\sum_{k=0}^m  \GG{b}_k\GG{p}_k\right)\!+\GG{a}_m\GG{b}_n\GG{r}_n\\
  &&\mbox{}+\sum_{k=n+1}^\infty \GG{a}_m\GG{b}_k\GG{p}_k\GG{q}_n\\
  &=&\!\left(\GG{d}_m\sum_{k=0}^{m-1}\GG{e}_k\GG{p}_k+\GG{f}_m\GG{p}_m-\GG{a}_m\sum_{k=0}^m \GG{b}_k\GG{p}_k\right)\!\GG{q}_n \\
  &&\mbox{}+\GG{a}_m\!\left(\GG{q}_n\sum_{k=0}^{n-1}\GG{b}_k\GG{p}_k +\GG{b}_n\GG{r}_n+\GG{q}_n\sum_{k=n+1}^\infty \GG{b}_k\GG{p}_k\right)=\sum_{i=1}^2 \GG{a}_m^{[i]}\GG{b}^{[i]}_n.
\end{Eqnarray*}
Similar computation applies to the cases $m=n$ and $m\geq n+1$.
\end{proof}

\begin{theorem}
  \label{thm:3.3}
  Let $\mathcal{A}$ and $\mathcal{B}$ be semi-separable matrices of ranks $s$ and $r$ respectively. Then, assuming that the product is bounded, $\mathcal{A}\mathcal{B}$ is semi-separable of rank at most $r+s$.
\end{theorem}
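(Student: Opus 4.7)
The natural strategy is to lift the direct calculation of Lemma~\ref{lemma:3.2} to arbitrary ranks by carrying along all $s$ generator pairs of $\mathcal{A}$ and all $r$ generator pairs of $\mathcal{B}$ simultaneously. So let $\mathbf{a}^{[\mu]},\mathbf{b}^{[\mu]},\mathbf{c},\mathbf{d}^{[\mu]},\mathbf{e}^{[\mu]}$ for $\mu=1,\ldots,s$ be generators for $\mathcal{A}$ in the format~\eqref{eq:3.1}, and $\mathbf{p}^{[\nu]},\mathbf{q}^{[\nu]},\mathbf{r},\mathbf{s}^{[\nu]},\mathbf{t}^{[\nu]}$, $\nu=1,\ldots,r$, the analogous generators for $\mathcal{B}$.

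\textbf{Main computation.} Fix $m<n$ and split
\begin{displaymath}
(\mathcal{AB})_{m,n}=\sum_{k=0}^{m-1}\mathcal{A}_{m,k}\mathcal{B}_{k,n}+\mathcal{A}_{m,m}\mathcal{B}_{m,n}+\sum_{k=m+1}^{n-1}\mathcal{A}_{m,k}\mathcal{B}_{k,n}+\mathcal{A}_{m,n}\mathcal{B}_{n,n}+\sum_{k=n+1}^{\infty}\mathcal{A}_{m,k}\mathcal{B}_{k,n}
\end{displaymath}
according to the position of $k$ relative to $m$ and $n$. Substituting~\eqref{eq:3.1}, every summand factorises as a function of $m$ (and an index $\mu$ or $\nu$) times a function of $n$ (and an index $\nu$ or $\mu$), with the single exception of the middle sum, in which $\sum_{k=m+1}^{n-1}\mathbf{b}^{[\mu]}_k\mathbf{p}^{[\nu]}_k$ couples $m$ and $n$ jointly. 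I would decouple it by the identity
\begin{displaymath}
\sum_{k=m+1}^{n-1}\mathbf{b}^{[\mu]}_k\mathbf{p}^{[\nu]}_k=\sum_{k=m+1}^{\infty}\mathbf{b}^{[\mu]}_k\mathbf{p}^{[\nu]}_k-\sum_{k=n}^{\infty}\mathbf{b}^{[\mu]}_k\mathbf{p}^{[\nu]}_k,
\end{displaymath}
trivial in the finite case and legitimate in the infinite case under the natural pairwise extension of the boundedness hypothesis~\eqref{eq:3.3}. Collecting and grouping by which generator vector each term carries, I expect the five pieces to reorganise into
\begin{displaymath}
(\mathcal{AB})_{m,n}=\sum_{\nu=1}^r X_m^{[\nu]}\,\mathbf{q}_n^{[\nu]}+\sum_{\mu=1}^s \mathbf{a}_m^{[\mu]}\,Y_n^{[\mu]},\qquad m<n,
\end{displaymath}
for explicit $X^{[\nu]}$ and $Y^{[\mu]}$ built from partial and tail sums of the generators of $\mathcal{A}$ and $\mathcal{B}$; this exhibits the strictly upper-triangular block of $\mathcal{AB}$ as a sum of at most $r+s$ rank-one outer products.

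\textbf{Lower triangle and closing.} The case $m>n+1$ is handled in mirror fashion, either by repeating the computation with the roles of rows and columns interchanged or by applying the upper-triangular argument to $(\mathcal{AB})^\top=\mathcal{B}^\top\mathcal{A}^\top$, since transposition preserves semi-separable rank. The diagonal contributes nothing to the rank count, so $\mathcal{AB}$ is semi-separable of rank at most $r+s$. As a sanity check, specialising to $r=s=1$ should recover Lemma~\ref{lemma:3.2} exactly; one verifies that $X^{[1]}$ and $Y^{[1]}$ then collapse term-by-term to the vectors $\mathbf{a}^{[1]}$ and $\mathbf{b}^{[2]}$ displayed there.

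\textbf{Main obstacle.} The combinatorial regrouping is routine once the middle-sum splitting trick is in place. The genuine subtlety is analytic, namely ensuring that each tail sum $\sum_{k\geq m}\mathbf{b}^{[\mu]}_k\mathbf{p}^{[\nu]}_k$ converges for every pair $(\mu,\nu)$; this is exactly what the phrase ``assuming that the product is bounded'' in the statement should be read as supplying, as the pairwise analogue of~\eqref{eq:3.3}. For a finite-dimensional truncation no such assumption is needed, and the bound $r+s$ is purely algebraic.
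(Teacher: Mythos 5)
Your proposal is correct and is essentially the paper's own argument: the paper packages the same five-way splitting of $\sum_k\mathcal{A}_{m,k}\mathcal{B}_{k,n}$ as Lemma~\ref{lemma:3.2} for a single pair of rank-one summands and then sums over all $rs$ pairs, the rank collapsing from $2rs$ to $r+s$ for exactly the reason you identify (in each rank-two output one factor depends only on the generator of $\mathcal{B}$, the other only on that of $\mathcal{A}$). The only cosmetic difference is that the paper decouples the middle sum as $\sum_{k=0}^{n-1}-\sum_{k=0}^{m}$ rather than by tail sums, which spares the extra convergence assumption on $\sum_k\mathbf{b}^{[\mu]}_k\mathbf{p}^{[\nu]}_k$ that your splitting would require in the infinite-dimensional case.
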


\begin{proof}
  Consistently with \R{eq:3.1}, we  partition $\mathcal{A}$ and $\mathcal{B}$ in the form
  \begin{displaymath}
    \mathcal{A}=\sum_{k=1}^s \mathcal{A}^{[k]},\qquad \mathcal{B}=\sum_{\ell=1}^r \mathcal{B}^{[\ell]},
  \end{displaymath}
  where both the $\mathcal{A}_k$s and the $\mathcal{B}^{[\ell]}$s are all semi-separable of rank 1. We write them in the form \R{eq:3.2} with added superscripts, whereby
  \begin{displaymath}
    \mathcal{A}\mathcal{B}=\sum_{k=1}^s \sum_{\ell=1}^r \mathcal{A}^{[k]}\mathcal{B}^{[\ell]}.
  \end{displaymath}
  Letting $m\leq n-1$ we use Lemma~\ref{lemma:3.2},
  \begin{displaymath}
    (\mathcal{A}\mathcal{B})_{m,n}=\sum_{k=1}^s \sum_{\ell=1}^r \!\left(\GG{a}_m^{[k,1]}\GG{b}_n^{[\ell,1]}+\GG{a}_m^{[k,2]}\GG{b}_n^{[\ell,2]}\right)\!.
  \end{displaymath}
  Since $\GG{b}_n^{[\ell,1]}=\GG{q}_n^{[\ell]}$, $\GG{a}_m^{[k,2]}=\GG{a}_m^{[k]}$, we deduce that
  \begin{displaymath}
    (\mathcal{A}\mathcal{B})_{m,n}=\sum_{\ell=1}^r\!\left(\sum_{k=1}^s \GG{a}_m^{[k,1]}\!\right)\!\GG{q}_n^{[\ell]}+\sum_{k=1}^s \GG{a}_m^{[k]}\!\left(\sum_{\ell=1}^r \GG{b}_n^{[\ell,2]}\!\right)\!.
  \end{displaymath}
  The calculation for $m\geq n+1$ is identical and the theorem follows.
\end{proof}

The boundedness condition can be easily formulated explicitly, {\em \'a la\/} \R{eq:3.3}.

Theorem~\ref{thm:3.3} is somewhat surprising, because a naive `generalisation' of Lemma~\ref{lemma:3.2} seems to imply that the rank of semi-separability propagates multiplicatively, rather than additively, leading to a fairly rapid loss of the benefits of semi-separability. Fortunately, exploiting structure like in the above proof means that this is not the case. Specifically, in our case we can take high powers of $\DDD$ (corresponding to high derivatives) while enjoying progressively diminished -- yet tangible -- benefits of semi-separability.

Note that, once computing finite truncations of infinite-dimensional matrices (like is the case with spectral methods based upon W-functions), it often pays to compute $\sum_{k=n+1}^\infty \GG{b}_k\GG{p}_k$ (and  corresponding sums for higher semi-separability ranks) either explicitly or truncating it much further than the underlying matrices, because the infinite sum might converge slowly.

We now return to the main theme of the paper. In our case $\DDD$ is skew-symmetric and semi-separable of rank~2, hence $\DDD^2$, which corresponds to a Laplacian in a discretised setting, is negative semidefinite and semi-separable of rank 4. In the terminology of the proof of the last theorem, using the notation from \R{eq:3.2}, we have
\begin{Eqnarray*}
  &&\GG{d}^{[i]}=\GG{b}^{[i]},\quad \GG{e}^{[i]}=-\GG{a}^{[i]},\quad \GG{f}^{[i]}=\MM{0},\qquad i=1,2,\\
  &&\GG{p}^{[i]}=\GG{a}^{[i]},\quad \GG{q}^{[i]}=\GG{b}^{[i]},\quad \GG{s}^{[i]}=\GG{b}^{[i]},\quad \GG{t}^{[i]}=-\GG{a}^{[i]},\quad \GG{r}^{[i]}=\MM{0}.
\end{Eqnarray*}
The boundedness condition \R{eq:3.3} boils down to 
\begin{equation}
  \label{eq:3.5}
  \sum_{n=0}^\infty {\GG{b}_n^{[1]}}^2,\; \left|\sum_{n=0}^\infty \GG{b}_n^{[1]} \GG{b}_n^{[2]}\right|\!,\;  \sum_{n=0}^\infty {\GG{b}_n^{[2]}}^2<\infty.
\end{equation}

It follows from Theorem~\ref{thm:3.1} that, in terminology of generalised hypergeometric functions,
\begin{Eqnarray*}
  \sum_{n=0}^\infty {\GG{b}_n^{[1]}}^2&=&\tfrac14 \sum_{n=0}^\infty \frac{(2n+\alpha+\beta+1)\G(n+\alpha+1)}{\G(n+\beta+1)\G(n+\alpha+\beta+1)}\\
  &=&\tfrac14 \frac{\G(\alpha+1)}{\G(\beta+1)\G(\alpha+\beta+1)} \sum_{n=0}^\infty  \frac{(2n+\alpha+\beta+1)n!(\alpha+1)_n}{n!(\beta+1)_n(\alpha+\beta+1)_n}\\
  &=&\tfrac14 \frac{(\alpha+\beta+1)\G(\alpha+1)}{\G(\beta+1)\G(\alpha+\beta+1)}\hyper{2}{2}{1,\alpha+1}{\beta+1,\alpha+\beta+1}{1}\\
  &&\mbox{}+\tfrac12 \frac{\G(\alpha+2)}{\G(\beta+2)\G(\alpha+\beta+2)} \hyper{2}{2}{2,\alpha+2}{\beta+2,\alpha+\beta+2}{1}\!;\\
  \sum_{n=0}^\infty {\GG{b}_n^{[2]}}^2&=&\tfrac14 \sum_{n=0}^\infty \frac{(2n+\alpha+\beta+1)\G(n+\beta+1)}{\G(n+\alpha+1)\G(n+\alpha+\beta+1)}\\
  &=&\tfrac14 \frac{(\alpha+\beta+1)\G(\beta+1)}{\G(\alpha+1)\G(\alpha+\beta+1)}\hyper{2}{2}{1,\beta+1}{\alpha+1,\alpha+\beta+1}{1}\\
  &&\mbox{}+\tfrac14 \frac{\G(\beta+2)}{\G(\alpha+2)\G(\alpha+\beta+2)} \hyper{2}{2}{2,\beta+2}{\alpha+2,\alpha+\beta+2}{1}\!;\\
  \sum_{n=0}^\infty \GG{b}_n^{[1]}\GG{b}_n^{[2]}&=&-\tfrac14 \sum_{n=0}^\infty (-1)^n \frac{2n+\alpha+\beta+1}{\G(n+\alpha+\beta+1)}\\
  &=&-\tfrac14 \frac{\alpha+\beta+1}{\G(\alpha+\beta+1)} \hyper{1}{1}{1}{\alpha+\beta+1}{-1}\\
  &&\mbox{}+\tfrac12 \frac{1}{\G(\alpha+\beta+2)} \hyper{1}{1}{2}{\alpha+\beta+2}{-1}\!.
\end{Eqnarray*}
All ${}_qF_q$ hypergeometric functions are entire \cite{rainville60sf} and it thus follows that the boundedness condition \R{eq:3.5} holds.\footnote{The above functions can be summed up using  incomplete Gamma function.}

We deduce that for all $\alpha,\beta>0$ the matrix $\DDD^2$ is bounded and semi-separable of rank 4. Higher powers of $\DDD$ can be calculated in a similar manner.

\section{Reflections on semi-separability and differentiation matrices}

A curious coincidence  might -- or might not -- be an indicator of deeper structure. The Hermite weight $\ee^{-x^2}$, $x\in\mathbb{R}$, yields a tridiagonal differentiation matrix which is `almost' (i.e.\ disregarding the first sub- and super-diagonals) semi-separable of rank~0. Both the  Laguerre weight and the ultraspherical weight produce a semi-separable differentiation matrix of rank~1, while the Jacobi weight leads to rank~2: this corresponds exactly to the number of parameters in the underlying orthogonal polynomials, $\CC{H}_n,\CC{L}_n^{(\alpha)},\CC{P}_n^{(\alpha,\alpha)}$ and $\CC{P}_n^{(\alpha,\beta)}$. This is evocative of the {\em Askey scheme,\/}  classifying orthogonal polynomials that can be expressed as generalised hypergeometric functions by the number of their parameters,  going considerably further than just two parameters \cite{askey85sbh}. Is there a connection between the Askey scheme, which has led to much fruitful insight into orthogonal polynomials, and semi-separability? Pass.

Another issue which might be interesting is the geometry of semi-separable matrices. The condition \R{eq:3.1} is necessary and sufficient for a matrix to be semi-separable of rank $s\in\{0,\ldots,r\}$ -- indeed, \citeasnoun{vandebril08mcs} emphasize the difference between {\em semi-separable matrices\/} (which are exactly of rank $r$) and {\em semi-separably generated matrices\/} defined by \R{eq:3.1}, of rank $\leq r$. Given $N\in\mathbb{N}\cup\{\infty\}$ and $r\in\{0,\ldots,N\}$, $r<\infty$, we let $\MM{S}_{N,r}$ and $\MM{T}_{N,r}$ denote the set of all $N\times N$ matrices which are semi-separable of exactly rank $r$ and semi-separable of rank $\leq r$, respectively.  It takes but a moment reflection to persuade ourselves that the set $\MM{S}_{N,r}$ has poor analytic properties -- in particular, it is not closed in any reasonable topology. However, the $\MM{T}_{N,r}$s are closed in the $\ell_2$ topology, form manifolds and these manifolds form a {\em flag\/}
\begin{displaymath}
  \MM{T}_{N,0}\subset\MM{T}_{N,1}\subset\cdots\subset \MM{T}_{N,r}\subset \cdots\subset\MM{T}_{N,N}.
\end{displaymath}
(Cf.~\cite{humphreys72ila} for a definition of flag.) We have already proved in Theorem~\ref{thm:3.3} that $\MM{T}_{N,r}\times\MM{T}_{N,s}\subseteq \MM{T}_{N,r+s}$ (subject to boundedness conditions for $N=\infty$). Likewise, it is trivial that $\MM{T}_{N,r}+\MM{T}_{N,s}\subseteq \MM{T}_{N,r+s}$. While the implications of this are clear within the subject matter of this paper, the design of spectral methods for partial differential equations, there might be broader implications in the theory of semi-separable matrices.

\bibliographystyle{agsm}


\end{document}